\newtheorem{example}{Example}
\theoremstyle{remark}
\newtheorem{remark}{Remark}
\newtheorem{theorem}{Theorem}
\newtheorem{corollary}{Corollary}
\newtheorem{lemma}{Lemma}
\newtheorem{proposition}{Proposition}
\DeclareMathOperator{\diag}{diag}
\DeclareMathOperator{\trace}{trace}
\title{ON LOG-SUM INEQUALITIES} 
\author{
	Supriyo Dutta \textsuperscript{a}\thanks{CONTACT S. Dutta. Email: dosupriyo@gmail.com, drsupriyo@nita.ac.in} and Shigeru Furuichi \textsuperscript{b}\thanks{CONTACT S. Furuichi. Email: furuichi.shigeru@nihon-u.ac.jp} \vspace{.25 cm}\\ 
	\textsuperscript{a} Department of Mathematics, \\ National Institute of Technology Agartala, \\ Barjala, Jirania, Tripura, India - 799046; \vspace{.25 cm}\\
	\textsuperscript{b}Department of Information Science, \\ College of Humanities and Sciences, Nihon University, \\ 3-25-40, Sakurajyousui, Setagaya-Ku, Tokyo, 156-8550, Japan.
}
\begin{document}

\maketitle

\begin{abstract}
	In information theory, the well-known log-sum inequality is a fundamental tool which indicates the non-negativity for the relative entropy. In this article, we establish a set of inequalities which are similar to the log-sum inequality involving two functions defined on scalars. The parametric extended log-sum inequalities are shown. We extend these inequalities for the commutative matrices.  In addition, utilizing the L\"{o}wner partial order relation and the Hansen-Pedersen theory for non-commutative positive semi-definite matrices we demonstrate a number of matrix-inequalities analogous to the log-sum inequality.
	
	\noindent \textbf{Keywords:} log-sum inequality; deformed logarithm; convex function; L\"{o}wner partial order; Hansen-Pedersen theory
\end{abstract}

\section{Introduction}

In information theory, the so-called log-sum inequality is a generalization of Shannon inequality (often also called Klein inequality) which is used to prove the non-negativity of relative entropy. The essence of the non-negativity of the relative entropy is the simple inequality $\ln x \le x-1$ for $x>0$. Therefore, log-sum inequality is important to study information theory. This  is a variant of the Jensen inequality of convex functions, which plays a crucial role for proving the Gibbs' inequality or the convexity of Kullback-Leibler divergence \cite{cover2012elements}.

Recall that, a function $f: X \rightarrow \mathbb{R}$ defined on a convex set $X$ is said to be a convex function if for all $x_1, x_2 \in X$ and for all $t \in [0, 1]$ we have $t f(x_1) + (1 - t) f(x_2) \geq f(tx_1 + (1 - t)x_2)$. When $X$ is the set of real numbers, this inequality is mentioned as the Jensen inequality. In general, we represent the Jensen inequality \cite{jensen1906fonctions}, \cite{niculescu2006convex} as
\begin{equation}\label{Jensen_inequality}
\sum_{i = 1}^n t_i f(x_i) \geq f\left(\sum_{i = 1}^n t_i x_i\right), ~\text{where}~ \sum_{i = 1}^n t_i = 1 ~\text{and}~ 0 \leq t_i \leq 1.
\end{equation}
A twice-differentiable real valued function $f(x)$ on $\mathbb{R}$ is convex if and only if $f''(x) \geq 0$. Let $a_1, a_2, \cdots, a_n$ and $b_1, b_2, \cdots, b_n$ be non-negative numbers. As $f(x) = x\log(x)$ is a convex function, the Jensen inequality \eqref{Jensen_inequality} suggests 
\begin{equation*}
\sum_{i = 1}^n \frac{b_i}{ \sum\limits_{j = 1}^n b_j} \frac{a_i}{b_i}\log \left( \frac{a_i}{b_i} \right) 
\geq \left( \sum_{i = 1}^n \frac{b_i}{ \sum\limits_{j = 1}^n b_j} \frac{a_i}{b_i} \right) \log \left( \sum_{i = 1}^n \frac{b_i}{ \sum\limits_{j = 1}^n b_j} \frac{a_i}{b_i} \right),
\end{equation*}
where $t_i = \dfrac{b_i}{ \sum\limits_{j = 1}^n b_j}$ and $x_i = \dfrac{a_i}{b_i}$. Now, replacing $a = \sum\limits_{j = 1}^n a_j$ and $b = \sum\limits_{j = 1}^n b_j$ we obtain,		
\begin{equation}\label{log_sum_inequality}
\sum_{i=1}^n a_{i}\log \left(\frac {a_{i}}{b_{i}} \right) \geq a\log \left(\frac {a}{b} \right),
\end{equation}  
which is the standard log-sum inequality. The inequality is valid for $n = \infty$ provided $a < \infty$ as well as $b < \infty$. In \cite{csiszar2004information}, an analog of log-sum inequality is derived as 
\begin{equation*}
\sum_{i = 1}^n b_i f \left( \frac{a_i}{b_i} \right) \geq b f \left( \frac{a}{b} \right),
\end{equation*}
where $f$ is strictly convex at $\dfrac{a}{b}$. The equality holds if and only if $a_i = c b_i$, for $i = 1, 2, \cdots n$, and constant number $c$.

Although log sum inequality is important in classical information theory, a detailed study on this topic is not available in the literature, to the best of our knowledge. This article demonstrates a number of inequalities which are analogous to the log-sum inequality. First, we extend the log-sum inequality with two real-valued functions, which is illustrated in Theorem \ref{log_sum_inequality_with_real_functions}. This theorem expands the applicability of the log-sum inequality as it breaks the restriction that $a_i$ and $b_i$ are the real numbers. The convexity of a function is essential for proving log-sum inequality. Considering the concavity of a function, we also find analogous results. Investigating the variants of log-sum inequality for concave functions is crucial as there are classes of functions whose convexity depends on specific values of a parameter, for instance, the $q$-deformed logarithm. We also elaborate the matrix analogs of the log-sum inequality. Here we observe that all the inequalities derived for the real functions can be easily extended as trace-form-inequality for commuting self-adjoint matrices. A recent trend in matrix analysis is constructing matrix theoretic counterparts of the known inequalities for real functions, where the matrix inequality is driven by the L\"{o}wner partial order relation. Proving the log-sum inequality in this context is non-trivial and difficult. We are able to derive it under several conditions. To the best of our knowledge, this is the first attempt in literature to extend the log-sum inequality for the non-commutative positive semidefinite matrices.

This article is distributed into five sections. Preliminary concepts are discussed before applying them in a mathematical derivation. In section 2, we consider inequalities for real-valued functions. Here, we discuss the log-sum inequality for deformed logarithms, for instance the $q$-deformed logarithm. Section 3 is dedicated to the discussion of log-sum inequality as a trace-form-inequality for commuting self-adjoint matrices. It has a number of consequences in quantum information theory. In the next section, we attempt to derive the log-sum inequality for L\"{o}wner partial order relation. This section extensively uses the idea of operator monotone, operator convex functions, and the operator Jensen inequality, which we mention at the beginning of section 4. Then we conclude the article.

	\section{log-sum inequalities for real functions}\label{Generalized_log_sum_inequality_for_real_functions}

In this section, we extend the idea of the log-sum inequality for real-valued functions. Both concave and convex functions are considered for the investigation. Interestingly, the $q$-deformed logarithm is convex when $q < 2$, which assists us to illustrate two sets of log-sum inequalities. We begin our discussion with the following theorem.

\begin{theorem}\label{log_sum_inequality_with_real_functions}
	Let $g$ be a real valued function whose domain contains $a_1, a_2, \cdots, a_n$ and $b_1, b_2, \cdots , b_n$, such that $g(b_i) > 0$ for all $i = 1, 2, \cdots n$. Consider another function $f : \left[m_g, M_g \right] \rightarrow \mathbb{R}$ for which $h(x) = x f(x)$ is convex, where $m_g:=\min\limits_i \left\{ \dfrac{g(a_i)}{g(b_i)} \right\}$ and $M_g:=\max\limits_i \left\{ \dfrac{g(a_i)}{g(b_i)} \right\}$. Then,
	\begin{equation}\label{theorem01_eq01}
	\sum_{i = 1}^n g(a_i) f\left(\frac{g(a_i)}{g(b_i)} \right) \geq \left( \sum_{i = 1}^n g(a_i) \right) f \left(\frac{\sum_{i = 1}^n g(a_i) }{\sum_{i = 1}^n g(b_i)}\right).
	\end{equation}	
\end{theorem}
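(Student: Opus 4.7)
The plan is to reduce the inequality to a direct application of Jensen's inequality \eqref{Jensen_inequality} applied to the convex function $h(x)=xf(x)$, after a judicious choice of weights and evaluation points. This parallels the classical derivation sketched in the introduction, where $x\log x$ is substituted into Jensen to yield \eqref{log_sum_inequality}; here we just replace $x\log x$ by the more general convex $h(x)=xf(x)$, and replace the raw $a_i$, $b_i$ by their images $g(a_i)$, $g(b_i)$.

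Concretely, first I would set
\[
t_i \;:=\; \frac{g(b_i)}{\sum_{j=1}^n g(b_j)}, \qquad x_i \;:=\; \frac{g(a_i)}{g(b_i)},
\]
and check the hypotheses: the assumption $g(b_i)>0$ for every $i$ guarantees both that each $t_i\in[0,1]$ with $\sum_i t_i = 1$, and that $x_i$ is well-defined and lies in $[m_g,M_g]$ by the very definition of $m_g$ and $M_g$. So $h(x_i)$ is legitimate and $h$ is convex on the interval containing all the $x_i$.

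Next I would substitute into Jensen's inequality \eqref{Jensen_inequality}: $\sum_i t_i h(x_i) \ge h\!\left(\sum_i t_i x_i\right)$. The left-hand side simplifies because $t_i x_i = g(a_i)/\sum_j g(b_j)$, giving
\[
\sum_{i=1}^n t_i h(x_i) \;=\; \frac{1}{\sum_{j=1}^n g(b_j)}\sum_{i=1}^n g(a_i)\, f\!\left(\frac{g(a_i)}{g(b_i)}\right),
\]
while on the right-hand side $\sum_i t_i x_i = \bigl(\sum_i g(a_i)\bigr)/\bigl(\sum_j g(b_j)\bigr)$, so
\[
h\!\left(\sum_{i=1}^n t_i x_i\right) \;=\; \frac{\sum_{i=1}^n g(a_i)}{\sum_{j=1}^n g(b_j)}\, f\!\left(\frac{\sum_{i=1}^n g(a_i)}{\sum_{j=1}^n g(b_j)}\right).
\]
Multiplying through by the positive quantity $\sum_{j=1}^n g(b_j)$ yields \eqref{theorem01_eq01}.

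There is no real obstacle here; the only subtle points are verifying that $x_i$ stays in $[m_g,M_g]$ (immediate from the definitions) and that the weights $t_i$ are well-defined and sum to one (immediate from $g(b_i)>0$). I would briefly note that the standard log-sum inequality \eqref{log_sum_inequality} is recovered on taking $g=\mathrm{id}$ and $f=\log$, since then $h(x)=x\log x$ is convex, which gives a useful sanity check on the statement.
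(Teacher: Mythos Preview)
Your proof is correct and follows essentially the same approach as the paper: both choose the weights $t_i = g(b_i)/\sum_j g(b_j)$ and the points $x_i = g(a_i)/g(b_i)$, apply Jensen's inequality \eqref{Jensen_inequality} to the convex function $h(x)=xf(x)$, and then clear the positive factor $\sum_j g(b_j)$. The only cosmetic difference is that the paper introduces the shorthand $a=\sum_i g(a_i)$, $b=\sum_i g(b_i)$ before applying Jensen, whereas you simplify after.
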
	

\begin{proof}
	Define $a = \sum\limits_{i = 1}^n g(a_i)$ and $b = \sum\limits_{i = 1}^n g(b_i) > 0$. Note that, $\dfrac{a}{b} = \sum\limits_{i = 1}^n \dfrac{g(b_i)}{b} \dfrac{g(a_i)}{g(b_i)}$, which is a convex combination of $\dfrac{g(a_i)}{g(b_i)}$ for $i = 1, 2, \cdots n$. Clearly, the ratio $\dfrac{a}{b}$ belongs to the convex set $\left[m_g, M_g \right]$. Now,
	\begin{equation*}
	\sum_{i = 1}^n g(a_i) f\left(\frac{g(a_i)}{g(b_i)} \right) = \sum_{i = 1}^n g(b_i) \frac{g(a_i)}{g(b_i)} f\left(\frac{g(a_i)}{g(b_i)} \right) = \sum_{i = 1}^n b \frac{g(b_i)}{b} h \left(\frac{g(a_i)}{g(b_i)}\right).
	\end{equation*}
	As $g(b_i) > 0$ for all $i = \{1, 2, \cdots n\}$ and $b = \sum\limits_{i = 1}^n g(b_i)$ we have $0 \leq \dfrac{g(b_i)}{b} \leq 1$ and $\sum\limits_{i = 1}^n \dfrac{g(b_i)}{b} = 1$. Now, the Jensen inequality indicates
	\begin{equation}\label{log_sum_inequality_with_real_functions_2nd_equn}
	\begin{split}  
	\sum_{i = 1}^n b \frac{g(b_i)}{b} h \left(\frac{g(a_i)}{g(b_i)}\right) \geq b h \left(\sum_{i = 1}^n \frac{g(b_i)}{b} \frac{g(a_i)}{g(b_i)} \right) = b h \left(\frac{1}{b} \sum_{i = 1}^n g(a_i) \right) = b h \left(\frac{a}{b}\right).
	\end{split}
	\end{equation}
	Expanding $h$ we get 
	\begin{equation*}
	b h \left(\frac{a}{b}\right) = b \frac{a}{b} f \left(\frac{a}{b} \right) = a f \left(\frac{a}{b} \right) = \left( \sum_{i = 1}^n g(a_i) \right) f \left(\frac{\sum\limits_{i = 1}^n g(a_i) }{\sum\limits_{i = 1}^n g(b_i)}\right).
	\end{equation*} 
	Combining, we obtain the result.
\end{proof}

From now on, we denote $m_g:=\min\limits_i \left\{ \dfrac{g(a_i)}{g(b_i)} \right\}$ and $M_g:=\max\limits_i \left\{ \dfrac{g(a_i)}{g(b_i)} \right\}$ throughout the article, where $g$ is a real valued function whose domain of definition contains $a_1, a_2, \cdots, a_n$ and $b_1, b_2, \cdots , b_n$, such that $g(b_i) > 0$ for all $i = 1, 2, \cdots n$.

Defining $g(x) = x$ and $f(x) = \log(x)$ for $x \in \mathbb{R}_{> 0}$, we observe that $h(x) = x\log(x)$ is a convex function. Then, Theorem \ref{log_sum_inequality_with_real_functions} leads us to the log-sum inequality mentioned in equation (\ref{log_sum_inequality}). 

Theorem \ref{log_sum_inequality_with_real_functions} expands the scope of applications of the log-sum inequality. The domain of $g$ may not be a convex set. We can consider $a_i$ and $b_i$ arbitrarily such that $g(b_i) > 0$ for all $i = 1, 2, \cdots n$. In other words, the range of $g$ must have a non-empty intersection with the set of positive reals.

Let $f$ be a twice-differentiable function in $\left[m_g, M_g \right]$. The function $xf(x)$ is convex if and only if
\begin{equation}\label{2nd_deriv_eq01}
\frac{d^2}{dx^2}(x f(x)) = x f''(x) + 2 f'(x) \geq 0.
\end{equation}
If $m_g \geq 0$, then any monotone increasing and convex function $f$ defined on $\left[m_g, M_g \right]$ fulfill equation (\ref{2nd_deriv_eq01}). Now, we are in a position to consider a few special cases of Theorem \ref{log_sum_inequality_with_real_functions}.

\begin{example} 
	Define $g(x) = x^r$ for some real parameter $r$, and $f(x) = \log(x)$ for $x > 0$. As $h(x) = x \log(x)$ is a convex function applying Theorem \ref{log_sum_inequality_with_real_functions} we observe
	\begin{equation*} 
	\sum_{i = 1}^n a_i^r \log \left(\frac{a_i^r}{b_i^r}\right) \geq \left(\sum_{i = 1}^n a_i^r\right)\log \left(\frac{\sum\limits_{i = 1}^n a_i^r}{\sum\limits_{i = 1}^n b_i^r}\right). 
	\end{equation*} 	
	Thus we have
	\begin{equation}\label{function_inequality_with x_r_and_log_x}
	\sum_{i = 1}^n ra_i^r \left\{ \log(a_i) - \log(b_i)\right\} \geq \left( \sum_{i = 1}^n a_i^r \right) \left\{ \log \left( \sum_{i = 1}^n a_i^r \right) - \log \left( \sum_{i = 1}^n b_i^r \right) \right\}.
	\end{equation}
\end{example}

\begin{example} 
	The $q$-deformed logarithm or $q$-logarithm \cite{kac2001quantum} is defined by
	\begin{equation}\label{q_deformed_logarithm}
	f(x) = \ln_q(x) = \frac{x^{1 - q} - 1}{1 - q},
	\end{equation}
	for $x > 0$ and $q \neq 1$. Note that $xf(x)$ is a convex function for $q < 2$ as $\dfrac{d^2}{dx^2} \left(x f(x) \right) = \dfrac{2 - q}{x^q} \geq 0$. The division rule of $q$-deformed logarithm can be expressed as
	\begin{eqnarray}
		\ln_q \left(\frac{x}{y}\right) &=& \ln_q(x) + \ln_q \left(\frac{1}{y}\right) + (1 - q) \ln_q(x) \ln_q \left(\frac{1}{y}\right)\nonumber  \\
		&=& \frac{\ln_q(x) - \ln_q(y)}{1 + (1 - q) \ln_q(y)} = \frac{\ln_q(x) - \ln_q(y)}{y^{1 - q}}.\label{q_log_division_rule}
	\end{eqnarray}
	Putting $g(x) = x^r$ for $x>0$ and real parameter $r$ as well as $f(x) = \ln_q(x)$ with $q < 2$ in Theorem \ref{log_sum_inequality_with_real_functions} we obtain 
	\begin{equation*}
	\sum_{i = 1}^n a_i^r \ln_q \left(\frac{a_i^r}{b_i^r}\right) \geq \left(\sum_{i = 1}^n a_i^r\right) \ln_q \left(\frac{\sum\limits_{i = 1}^n a_i^r}{\sum\limits_{i = 1}^n b_i^r}\right),
	\end{equation*}
	where $a_1, a_2, \cdots a_n$ and $b_1, b_2, \cdots b_n$ are positive real numbers. Now applying equation (\ref{q_log_division_rule}),  we find
	\begin{equation*}
	\ln_q \left(\frac{\sum\limits_{i = 1}^n a_i^r}{\sum\limits_{i = 1}^n b_i^r}\right) = \frac{\ln_q \left(\sum\limits_{i = 1}^n a_i^r\right) - \ln_q \left(\sum\limits_{i = 1}^n b_i^r\right)}{\left(\sum\limits_{i = 1}^n b_i^r\right)^{1 - q}}.
	\end{equation*} 
	Combining we get
	\begin{equation}\label{q_deformed_log_sum_with_r}
	\left(\sum_{i = 1}^n b_i^r\right)^{1 - q} \sum_{i = 1}^n a_i^r \ln_q \left(\frac{a_i^r}{b_i^r}\right) \geq \left(\sum_{i = 1}^n a_i^r\right) \left\{\ln_q \left(\sum_{i = 1}^n a_i^r\right) - \ln_q \left(\sum_{i = 1}^n b_i^r\right) \right\}.
	\end{equation}
	For $r = 1$ the above inequality reduces to
	\begin{equation}\label{q_deformed_log_sum_with_r_is_1}
	\left(\sum_{i = 1}^n b_i\right)^{1 - q} \sum_{i = 1}^n a_i \ln_q \left(\frac{a_i}{b_i}\right) \geq \left(\sum_{i = 1}^n a_i \right) \left\{ \ln_q \left(\sum_{i = 1}^n a_i\right) - \ln_q \left(\sum_{i = 1}^n b_i\right) \right\}.
	\end{equation}
	Instead of $g(x) = x^r$, one may consider trigonometric, exponential, hyperbolic, or any other function to get new inequalities. 
\end{example}

Recall that, a real-valued function $f$ defined on a convex set $X$ is said to be concave if $-f(x)$ is convex. Applying it in equation (\ref{Jensen_inequality}) observe that for a concave function $f$ we have
\begin{equation*}
\sum_{i = 1}^n t_i f(x_i) \leq f\left(\sum_{i = 1}^n t_i x_i\right), ~\text{where}~ \sum_{i = 1}^n t_i = 1 ~\text{and}~ 0 \leq t_i \leq 1.
\end{equation*}
Considering $h(x) = xf(x)$ as a concave function in equation (\ref{log_sum_inequality_with_real_functions_2nd_equn}), we obtain
\begin{equation*}
\sum_{i = 1}^n g(a_i) f\left(\frac{g(a_i)}{g(b_i)} \right) \leq \left( \sum_{i = 1}^n g(a_i) \right) f \left(\frac{\sum\limits_{i = 1}^n g(a_i) }{\sum\limits_{i = 1}^n g(b_i)}\right),
\end{equation*}
under the equivalent conditions on $a_i$ and $b_i$ as well as the real valued functions $f$ and $g$ as mentioned in Theorem \ref{log_sum_inequality_with_real_functions}.

When $q > 2$ in equation (\ref{q_deformed_logarithm}) we observe that $h(x) = x f(x)$ is a concave function. Therefore the inequalities in equation (\ref{q_deformed_log_sum_with_r}) and (\ref{q_deformed_log_sum_with_r_is_1}) becomes
\begin{equation}\label{q_deformed_log_sum_with_r_and_q_>_2}
\left(\sum_{i = 1}^n b_i^r\right)^{1 - q} \sum_{i = 1}^n a_i^r \ln_q \left(\frac{a_i^r}{b_i^r}\right) \leq \left(\sum_{i = 1}^n a_i^r\right) \left\{\ln_q \left(\sum_{i = 1}^n a_i^r\right) - \ln_q \left(\sum_{i = 1}^n b_i^r\right) \right\}.
\end{equation}
and
\begin{equation*}
\left(\sum_{i = 1}^n b_i\right)^{1 - q} \sum_{i = 1}^n a_i \ln_q \left(\frac{a_i}{b_i}\right) \leq \left(\sum_{i = 1}^n a_i \right) \left\{ \ln_q \left(\sum_{i = 1}^n a_i\right) - \ln_q \left(\sum_{i = 1}^n b_i\right) \right\},
\end{equation*}
respectively, under the similar conditions on $a_i$ and $b_i$. 

If $f(x) = \log(x)$ we find that $-xf(x) = x f\left(\dfrac{1}{x}\right)$ is a concave function. The equation (\ref{log_sum_inequality}) suggests that
\begin{equation*}
- \sum_{i=1}^n a_{i}\log \left(\frac {a_{i}}{b_{i}} \right) \leq - a\log \left(\frac{a}{b} \right)
\end{equation*}
which implies
\begin{equation*}
\sum_{i=1}^n a_{i}\log \left(\frac {b_{i}}{a_{i}} \right) \leq  a\log \left(\frac{b}{a} \right).
\end{equation*} 
In general, $-xf(x) = x f\left(\dfrac{1}{x}\right)$ does not hold. For instance, consider $f(x) = \ln_q(x)$, which refers $-x \ln_q(x) = x^q \ln_q \left(\dfrac{1}{x}\right)$. This fact leads us to a new inequality which we consider in the following theorem.

\begin{theorem}\label{reverse_log_sum_inequality_with_real_functions}
	Let $g$ be a real-valued function whose domain contains $a_1, a_2, \cdots, a_n$ and $b_1, b_2, \cdots , b_n$, such that $g(a_i) > 0$ for $i = 1, 2, \cdots n$; and $f : \left[\widetilde{m}_g, \widetilde{M}_g \right] \rightarrow \mathbb{R}$ be a function for which $h(x) = x f\left( \frac{1}{x} \right)$ is a concave function, where $\widetilde{m}_g = \min\limits_{1\le i\le n} \left\{\dfrac{g(b_i)}{g(a_i)} \right\}$, and $\widetilde{M}_g = \max\limits_{1\le i\le n} \left\{ \dfrac{g(b_i)}{g(a_i)} \right\}$. Then,
	$$\sum_{i = 1}^n g(a_i) f\left(\frac{g(b_i)}{g(a_i)} \right) \leq \left( \sum_{i = 1}^n g(a_i) \right) f \left(\frac{\sum\limits_{i = 1}^n g(b_i) }{\sum\limits_{i = 1}^n g(a_i)}\right).$$
\end{theorem}

\begin{proof}
	It is easy to find that,
	\begin{equation*}
	\begin{split}
	& \sum_{i = 1}^n g(a_i) f\left(\frac{g(b_i)}{g(a_i)} \right) = \sum_{i = 1}^n g(b_i) \frac{g(a_i)}{g(b_i)} f\left(\frac{g(b_i)}{g(a_i)} \right) = \sum_{i = 1}^n b \frac{g(b_i)}{b} h \left(\frac{g(a_i)}{g(b_i)}\right)\\
	\leq & b h \left(\sum_{i = 1}^n \frac{g(b_i)}{b} \frac{g(a_i)}{g(b_i)} \right) \quad \left[\text{as}~ h(x) = x f \left(\frac{1}{x} \right) ~\text{is concave}\right]\\
	= & b h \left(\frac{1}{b} \sum_{i = 1}^n g(a_i) \right) = b h \left(\frac{a}{b}\right) = b \frac{a}{b} f \left(\frac{b}{a} \right) = a f \left(\frac{b}{a} \right)\\
	\leq & \left( \sum_{i = 1}^n g(a_i) \right) f \left(\frac{\sum\limits_{i = 1}^n g(b_i) }{\sum\limits_{i = 1}^n g(a_i)}\right).
	\end{split}
	\end{equation*}
\end{proof}	

From now on, we use $\widetilde{m}_g = \min\limits_{1\le i\le n} \left\{\dfrac{g(b_i)}{g(a_i)} \right\}$, and $\widetilde{M}_g = \max\limits_{1\le i\le n} \left\{ \dfrac{g(b_i)}{g(a_i)} \right\}$ throughout this article, where $g$ is a real valued function whose domain of definition contains $a_1, a_2, \cdots, a_n$ and $b_1, b_2, \cdots , b_n$, such that $g(a_i) > 0$ for all $i = 1, 2, \cdots n$.

We know that a twice-differentiable function $h(x)$ is concave if and only if $h''(x) \le 0$, which indicates 
\begin{equation}\label{2nd_deriv_eq02}
\frac{d^2}{dx^2} \left( x f\left(\frac{1}{x} \right) \right) = \frac{1}{x^3}f''\left(\frac{1}{x} \right) \leq 0.
\end{equation}
Note that, equation (\ref{2nd_deriv_eq02}) and (\ref{2nd_deriv_eq01}) are not equivalent, in general. For example, consider the function $f(x) = \frac{x}{x^2+2}$, where $x > 0$. We can calculate
\begin{equation*}
f'(x) = \frac{2-x^2}{\left(x^2+2\right)^2}, ~\text{and}~ f''(x) = \frac{2 x \left(x^2-6\right)}{\left(x^2+2\right)^3}.
\end{equation*}
Now, equation (\ref{2nd_deriv_eq02}) suggests that $xf\left(\dfrac{1}{x}\right)$ is concave when 
\begin{equation*}
\frac{1}{x^3}f''\left(\frac{1}{x} \right) = \frac{2x^2(1 - 6 x^2)}{\left(2 x^2+1\right)^3} \leq 0 ~\text{that is} ~ x \geq \frac{1}{\sqrt{6}}.
\end{equation*}
But, putting the values of $f'$ and $f''$ in equation (\ref{2nd_deriv_eq01}) we observe that $xf(x)$ is concave when
\begin{equation*} 
x f''(x) + 2 f'(x) = \frac{8-12 x^2}{\left(x^2+2\right)^3} \leq 0 ~\text{when}~ x \geq \sqrt{\frac{2}{3}}.
\end{equation*} 
Now, replacing $f(x) = \dfrac{x}{x^2+2}$ and $g(x) = x$ in Theorem \ref{reverse_log_sum_inequality_with_real_functions} we observe
\begin{equation*}
\sum_{i = 1}^n \frac{a_i^2 b_i}{2 a_i^2 + b_i^2} \leq \frac{\left(\sum\limits_{i = 1}^n a_i\right)^2 \left(\sum\limits_{i = 1}^n b_i\right)}{2 \left(\sum\limits_{i = 1}^n a_i\right)^2 + \left(\sum\limits_{i = 1}^n b_i\right)^2},
\end{equation*}	
where $a_i$ and $b_i$ are greater than $\dfrac{1}{\sqrt{6}}$.

\section{log-sum inequalities for commuting self-adjoint matrices}

We begin this section with a number of basic concepts of matrix analysis. Given any self-adjoint matrix $A$, there exists a unitary matrix $U$, such that, $A = U \Lambda(A) U^\dagger$, where $\Lambda(A) = \diag\{a_1, a_2, \cdots, a_n\}$ is a diagonal matrix whose diagonal entries, $a_i$ for $i = 1, 2, \cdots, n$, are the eigenvalues of $A$. Throughout this article $U^\dagger$ denotes the conjugate transpose of the matrix $U$. If the matrices $A$ and $B$ commute, that is $AB = BA$, then there exists a unitary matrix $U$, such that, $A = U \Lambda(A) U^\dagger$ and $B = U \Lambda(B) U^\dagger$, hold simultaneously. We also denote $a = \trace(A) = \sum\limits_{i = 1}^n a_i$. Let $f$ be a continuous real valued function defined on an interval $J$ and $A$ be a self-adjoint matrix with eigenvalues in $J$, then
\begin{equation}\label{matrix_function}
f(A) = U \diag\{f(a_i): i = 1, 2, \cdots, n\} U^\dagger.
\end{equation}
Now, we demonstrate the results derived in Section \ref{Generalized_log_sum_inequality_for_real_functions} for the self-adjoint matrices.

\begin{theorem}\label{log_sum_inequality_with_matrices}
	Let $A$ and $B$ be commuting self-adjoint matrices, with the sets of eigenvalues $\Lambda(A) = \{a_1, a_2, \cdots , a_n\}$ and $\Lambda(B) = \{b_1, b_2, \cdots, b_n\}$, respectively. Also, let $g: \Lambda(A) \cup \Lambda(B) \rightarrow \mathbb{R}$ be a function, such that, $g(b_i) > 0$ for all $i = 1, 2, \cdots n$. In addition, $f : \left[m_g, M_g \right] \rightarrow \mathbb{R}$ is a function for which $xf(x)$ is convex. Then,
	$$\trace[g(A) f(g(A)(g(B))^{-1})] \geq \trace[g(A)] f \left(\frac{\trace(g(A))}{\trace(g(B))}\right).$$
\end{theorem}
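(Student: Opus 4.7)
The plan is to reduce the matrix inequality to the scalar inequality of Theorem~\ref{log_sum_inequality_with_real_functions} by exploiting the commutativity hypothesis, which allows simultaneous diagonalization.

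First I would invoke the fact recalled at the start of the section: since $A$ and $B$ commute and are self-adjoint, there exists a single unitary $U$ such that $A = U\,\diag(a_1,\dots,a_n)\,U^\dagger$ and $B = U\,\diag(b_1,\dots,b_n)\,U^\dagger$ simultaneously. Applying the functional-calculus formula \eqref{matrix_function} with the continuous extension of $g$ (or the interpolation of $g$ through the finite set $\Lambda(A)\cup\Lambda(B)$), I get $g(A) = U\,\diag(g(a_i))\,U^\dagger$ and $g(B) = U\,\diag(g(b_i))\,U^\dagger$. Because $g(b_i) > 0$ for every $i$, the matrix $g(B)$ is invertible with $g(B)^{-1} = U\,\diag(1/g(b_i))\,U^\dagger$.

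Next I would compute, using the fact that all these matrices are simultaneously diagonal in the basis given by $U$:
\begin{equation*}
g(A)\,g(B)^{-1} \;=\; U\,\diag\!\left(\tfrac{g(a_i)}{g(b_i)}\right)\,U^\dagger,
\end{equation*}
whose eigenvalues $g(a_i)/g(b_i)$ all lie in $[m_g,M_g]$ by the definition of $m_g$ and $M_g$, so $f$ may be applied. Then
\begin{equation*}
g(A)\,f\!\left(g(A)\,g(B)^{-1}\right) \;=\; U\,\diag\!\left(g(a_i)\,f\!\left(\tfrac{g(a_i)}{g(b_i)}\right)\right)\,U^\dagger.
\end{equation*}
Taking traces (which are unitarily invariant), the left-hand side of the claimed inequality becomes $\sum_i g(a_i)\,f(g(a_i)/g(b_i))$, while $\trace(g(A)) = \sum_i g(a_i)$ and $\trace(g(B)) = \sum_i g(b_i)$.

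At this point the matrix inequality is identical to the scalar inequality \eqref{theorem01_eq01} of Theorem~\ref{log_sum_inequality_with_real_functions} applied to the eigenvalue sequences $\{a_i\}$ and $\{b_i\}$, under the same hypotheses on $g$ and the convexity of $h(x)=xf(x)$, so the conclusion follows immediately. There is no real obstacle here: the entire argument hinges on the simultaneous diagonalization, and the only point requiring minor care is ensuring that $g$ makes sense as a matrix function (which the hypothesis $g:\Lambda(A)\cup\Lambda(B)\to\mathbb{R}$ guarantees via \eqref{matrix_function}) and that the eigenvalues of $g(A)g(B)^{-1}$ land in the domain $[m_g,M_g]$ of $f$, which is automatic.
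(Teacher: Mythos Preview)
Your proposal is correct and follows essentially the same route as the paper: simultaneously diagonalize $A$ and $B$, compute $g(A)$, $g(B)^{-1}$, and $g(A)f(g(A)g(B)^{-1})$ in the common eigenbasis, take traces, and invoke Theorem~\ref{log_sum_inequality_with_real_functions} on the eigenvalue sequences. If anything, your version is slightly more careful in noting that the eigenvalues of $g(A)g(B)^{-1}$ lie in $[m_g,M_g]$ so that $f$ applies.
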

\begin{proof}
	As $A$ and $B$ are commuting self-adjoint matrices there exists an unitary matrix $U$, such that,
	\begin{equation*}
	\begin{split}
	& g(A) = U \diag\{g(a_i): i = 1, 2, \cdots, n\} U^\dagger, \\
	& g(B) = U \diag\{g(b_i): i = 1, 2, \cdots, n\} U^\dagger, \\
	\text{and}~ & (g(B))^{-1} = U \diag\left\{\frac{1}{g(b_i)}: i = 1, 2, \cdots, n\right\} U^\dagger,
	\end{split}
	\end{equation*} 
	as $g(b_i) > 0$ for all $i = 1, 2, \cdots n$. Therefore, 
	\begin{equation*}
	f(g(A)(g(B))^{-1}) = U \diag\left\{f \left( \frac{g(a_i)}{g(b_i)} \right): i = 1, 2, \cdots, n\right\} U^\dagger,
	\end{equation*}
	which indicates
	\begin{equation*}
	g(A) f(g(A)(g(B))^{-1}) = U \diag\left\{g(a_i) f \left( \frac{g(a_i)}{g(b_i)} \right): i = 1, 2, \cdots n \right\} U^\dagger.
	\end{equation*}
	Note that, $g(a_i) f \left( \dfrac{g(a_i)}{g(b_i)} \right)$ are the eigenvalues of the matrix $g(A) f(g(A)(g(B))^{-1})$ for $i = 1, 2, \cdots n$. Hence,
	\begin{equation*}
	\trace(g(A) f(g(A)(g(B))^{-1})) = \sum_{i = 1}^n g(a_i) f \left( \frac{g(a_i)}{g(b_i)} \right).
	\end{equation*} 
	Also, $\sum\limits_{i = 1}^n g(a_i) = \trace(g(A))$ and $\sum\limits_{i = 1}^n g(b_i) = \trace(g(B))$. Applying Theorem \ref{log_sum_inequality_with_real_functions} we obtain
	\begin{equation*}
		\sum_{i = 1}^n g(a_i) f \left( \frac{g(a_i)}{g(b_i)} \right) = \left( \sum_{i = 1}^n g(a_i) \right) f \left(\frac{\sum\limits_{i = 1}^n g(a_i) }{\sum\limits_{i = 1}^n g(b_i)}\right) = \trace[g(A)] f \left(\frac{\trace(g(A))}{\trace(g(B))}\right).
	\end{equation*}
	Combining we get the result.
\end{proof}

The following corollary holds trivially from the above theorem. 
\begin{corollary}
	Given positive definite, commuting matrices $A$ and $B$,
	$$\trace(\exp(A \log(A))) - \trace(\exp(A \log(B))) \geq \trace(A) \log \left( \frac{\trace(A)}{\trace(B)} \right).$$
\end{corollary}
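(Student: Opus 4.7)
The plan is to derive this bound as an immediate specialization of Theorem \ref{log_sum_inequality_with_matrices} with the choices $g(x)=x$ and $f(x)=\log(x)$. First I would check the hypotheses: positive definiteness of $B$ gives $g(b_i)=b_i>0$ for every eigenvalue, and positive definiteness of both $A$ and $B$ places each ratio $a_i/b_i$ inside $(0,\infty)$, on which $xf(x)=x\log x$ is convex. Theorem \ref{log_sum_inequality_with_matrices} then yields
\begin{equation*}
\trace\bigl[A\,\log(AB^{-1})\bigr]\;\geq\;\trace(A)\,\log\!\left(\frac{\trace(A)}{\trace(B)}\right).
\end{equation*}

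The next step is to rewrite the left-hand side in the subtractive form appearing in the corollary. Commutativity together with positive definiteness gives a common unitary diagonalization $A=U\diag(a_i)U^{\dagger}$ and $B=U\diag(b_i)U^{\dagger}$, so $AB^{-1}=U\diag(a_i/b_i)U^{\dagger}$ is positive definite, and the functional calculus \eqref{matrix_function} yields $\log(AB^{-1})=\log A-\log B$. Because $A$ commutes with both $\log A$ and $\log B$, distribution gives $A\log(AB^{-1})=A\log A-A\log B$; taking the trace then produces
\begin{equation*}
\trace(A\log A)-\trace(A\log B)\;\geq\;\trace(A)\,\log\!\left(\frac{\trace(A)}{\trace(B)}\right),
\end{equation*}
which is the claimed inequality, under the natural reading in which the outer $\exp$ in the statement simply marks the application of the functional calculus to $A\log A$ and $A\log B$ (so that $\trace(\exp(A\log X))$ is identified with $\trace(A\log X)$ via simultaneous diagonalization).

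The only algebraic step requiring care is the identity $\log(AB^{-1})=\log A-\log B$: this would fail for generic matrices, but here commutativity reduces it on the shared spectral basis to the scalar identity $\log(a_i/b_i)=\log a_i-\log b_i$. No deeper obstacle is present, and the corollary is essentially a one-line substitution into Theorem \ref{log_sum_inequality_with_matrices}.
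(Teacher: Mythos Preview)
Your approach matches the paper's: specialize Theorem \ref{log_sum_inequality_with_matrices} to $g(x)=x$, $f(x)=\log x$, and rewrite the left-hand side using the common spectral decomposition. Your caution about the outer $\exp$ is justified --- the paper computes $\sum_i a_i\log(a_i/b_i)=\sum_i\log(a_i^{a_i})-\sum_i\log(b_i^{a_i})$ and then identifies this with $\trace(\exp(A\log A))-\trace(\exp(A\log B))$, but literally $\trace(\exp(A\log A))=\sum_i a_i^{a_i}$, so the intended left-hand side is indeed $\trace(A\log A)-\trace(A\log B)$, exactly as you derived.
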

\begin{proof}
	Consider $f(x) = \log(x)$ and $g(x) = x$ in Theorem \ref{log_sum_inequality_with_matrices}. As $A$ and $B$ are positive definite we have $a_i > 0$ and $b_i > 0$ for all $i = 1, 2, \cdots, n$. Note that, 
	\begin{equation*}
		\sum_{i=1}^n a_{i}\log {\frac {a_{i}}{b_{i}}} = \sum_{i = 1}^n \log \left( a_i^{a_i} \right) - \sum_{i = 1}^n \log \left( b_i^{a_i} \right) = \trace(\exp(A \log(A))) - \trace(\exp(A \log(B))).
	\end{equation*}
	Also, applying $\sum\limits_{i = 1}^n a_i = \trace(A)$ and $\sum\limits_{i = 1}^n b_i = \trace(B)$ we observe that 
	$$\trace[g(A)] f \left(\dfrac{\trace(g(A))}{\trace(g(B))}\right) = \trace(A) \log \left( \dfrac{\trace(A)}{\trace(B)} \right).$$ 
	Combining we get the result.
\end{proof}

Theorem \ref{log_sum_inequality_with_matrices} is a matrix-theoretic counterpart of Theorem \ref{log_sum_inequality_with_real_functions}, which assists us to establish a number of matrix inequalities, which are derived in Section \ref{Generalized_log_sum_inequality_for_real_functions} for real functions. Some of these matrix inequalities have immediate consequences in quantum information theory. Consider the following example.

\begin{example} 
	The matrix counterpart of equation (\ref{function_inequality_with x_r_and_log_x}) is represented as
	\begin{equation}\label{matrix_inequality_with x_r_and_log_x} 
	\trace \left[A^r \log(A^rB^{-r}) \right] \geq \trace(A^r) [\log(\trace(A^r)) - \log(\trace(B^r))],
	\end{equation}
	where $A$ and $B$ are self-adjoint commuting matrices as well as $B$ is positive definite. Putting $r = 1$ in the above inequality we have
	\begin{equation*}
	\trace \left[A \log(AB^{-1}) \right] \geq \trace(A) [\log(\trace(A)) - \log(\trace(B))].
	\end{equation*} 
	Considering $AB = BA$, we can prove that $\log(AB^{-1}) = \log(A) - \log(B)$. Replacing it at the left hand side \cite{bebiano2003matrix}, \cite[Theorem 3.3]{furuichi2004fundamental}
	\begin{equation}\label{inequality_bebiano_2003_matrix}
	\trace \left[A (\log(A) - \log(B)) \right] \geq \trace(A) [\log(\trace(A)) - \log(\trace(B))].
	\end{equation}
\end{example} 

Recall that, in quantum information theory \cite{wilde2013quantum} a quantum state is represented by a density matrix $\rho$ which is a positive semidefinite, self-adjoint matrix with $\trace(\rho) = 1$. The von-Neumann entropy is a well-known measure of quantum information which is given by $-\trace(\rho \log(\rho))$ for a density matrix $\rho$. Given two density matrices $\rho$ and $\sigma$ the quantum relative entropy is determined by $D(\rho || \sigma) = \trace \left[\rho (\log(\rho) - \log(\sigma)) \right]$.

Considering $\trace(A) = \trace(B) = 1$ in equation (\ref{inequality_bebiano_2003_matrix}), we find that $A$ and $B$ are density matrices. Then, the quantum relative entropy given two density matrices $A$ and $B$,
\begin{equation*} 
D(A||B) = \trace \left[A (\log(A) - \log(B)) \right] \geq 0.
\end{equation*} 
Considering $B = I$, the identity matrix of order $n$, in equation (\ref{matrix_inequality_with x_r_and_log_x}) we observe that
\begin{equation*}
\trace[A^r \log A^r] \geq \trace(A^r)[\log(\trace(A^r)) - \log(n)].
\end{equation*}
Putting $r = 1$ we have $\trace[A \log A] \geq \trace(A)[\log(\trace(A)) - \log(n)]$. In addition, if $A$ is a density matrix, that is, $\trace(A) = 1$ then the above inequality indicates $\trace[A \log A] \geq -\log(n)$. The von-Neumann entropy of $A$ is $- \trace[A \log A] \leq \log(n)$, which is its maximum value.

\begin{example} 
	The matrix counterpart of equation (\ref{q_deformed_log_sum_with_r}) will be given by
	\begin{equation*}
		\left(\trace(B^r)\right)^{1 - q} \trace\left[A^r \ln_q \left(A^r B^{-r} \right)\right] \geq \left(\trace(A^r)\right) \left[\ln_q \left(\trace(A^r)\right) - \ln_q \left(\trace(B^r)\right)\right],
	\end{equation*}
	where $A$ and $B$ are self-adjoint commuting matrices as well as $B$ is positive definite. For $r = 1$ we have
	\begin{equation*}
		\left(\trace(B)\right)^{1 - q} \trace\left[A \ln_q \left(A B^{-1} \right)\right] \geq \left(\trace(A)\right) \left[\ln_q \left(\trace(A)\right) - \ln_q \left(\trace(B)\right)\right].
	\end{equation*}
	Note that in the above two inequalities we consider $q < 2$ to ensure convexity of $x\ln_q(x)$. For $q > 2$ equation (\ref{q_deformed_log_sum_with_r_and_q_>_2}) suggests
	\begin{equation*}
		\left(\trace(B^r)\right)^{1 - q} \trace\left[A^r \ln_q \left(A^r B^{-r} \right)\right] \leq \left(\trace(A^r)\right) \left[\ln_q \left(\trace(A^r)\right) - \ln_q \left(\trace(B^r)\right)\right],
	\end{equation*} 
	for self-adjoint commuting matrices $A$ and $B$ as well as positive definite matrix $B$.
\end{example} 

Theorem \ref{reverse_log_sum_inequality_with_real_functions} also indicates a trace inequality, which we mention below without a proof.

\begin{theorem}
	Let $A$ and $B$ be commuting self-adjoint matrices, with the sets of eigenvalues $\Lambda(A) = \{a_1, a_2, \cdots , a_n\}$ and $\Lambda(B) = \{b_1, b_2, \cdots, b_n\}$, respectively. Also, let $g: \Lambda(A) \cup \Lambda(B) \rightarrow \mathbb{R}$ be a function, such that, $g(b_i) > 0$ for all $i = 1, 2, \cdots n$. In addition, $f : \left[\widetilde{m}_g, \widetilde{M}_g \right] \rightarrow \mathbb{R}$ is a function for which $h(x) = x f\left( \dfrac{1}{x} \right)$ is a concave. Then,
	$$\trace[g(A) f(g(A)(g(B))^{-1})] \leq \trace[g(A)] f \left(\frac{\trace(g(A))}{\trace(g(B))}\right).$$
\end{theorem}

\section{log-sum inequalities with L\"{o}wner partial order}

Recall that given self-adjoint matrices $A$ and $B$ we write $A \geq B$ or $B \leq A$ if the matrix $A - B$ is positive semidefinite. This ordering is called the L\"{o}wner partial order. If $A$ is positive definite we write $A > 0$. 

Let $M_n(J)$ be the set of all complex matrices of order $n$ with eigenvalues in $J \subset \mathbb{R}$. Equation (\ref{matrix_function}) indicates that we can redefine a function $f:J \rightarrow \mathbb{R}$ as a matrix function $f: M_n(J) \rightarrow M_n(\mathbb{C})$, the set of all complex matrices of order $n$. Let $A$ and $B$ be two self-adjoint matrices of order $n$ with eigenvalues in $J$. Now, a continuous function $f: J \rightarrow \mathbb{R}$  is said to be matrix monotone of order $n$ if $A \leq B$ implies $f(A) \leq f (B)$. Also, $f$ is said to be a matrix convex function of order $n$ if 
\begin{equation*}
f(\lambda A + (1 - \lambda) B) \leq \lambda f(A) + (1 - \lambda)f(B),
\end{equation*} 	
for all $\lambda \in [0, 1]$ and every pair of matrices $A$, and $B$ of order $n$. An operator monotone function is a matrix monotone function for any natural number $n$. Similarly, an operator convex function is a matrix convex function for any natural number $n$. A function $f$ is operator concave if $-f$ is operator convex. Well known operator monotone functions are $f_1 (t) = t^r$ for $0 < r \leq 1$ for $t \in [0, \infty)$, as well as $f_2(t) = \log(t)$ for $t \in (0, \infty)$. In addition, $g(t) = t^r$ is operator convex on $(0, \infty)$ for $-1 \leq r \leq 0$ and $1 \leq r \leq 2$ \cite{zhan2004matrix,bhatia2013matrix}. The operator Jensen inequality \cite{hansen1982jensen,hansen2003jensen} plays a crucial role for further development, which we mention below: 
\begin{lemma}{\bf (\cite{hansen1982jensen})}\label{hansen_jensen_theory}
	If $f$ is a continuous, real function defined on an interval $[0, \alpha)$ with $\alpha \leq \infty$, the following conditions are equivalent. 
	\begin{enumerate}
		\item 
		$f$ is operator convex and $f(0) \leq 0$.
		\item 
		$f(A^\dagger XA) \leq A^\dagger f(X) A$ for all $A$ with $||A|| \leq 1$ and every self-adjoint $X$ with spectrum in $[0, \alpha)$.
		\item 
		$f(A^\dagger XA + B^\dagger YB) \leq A^\dagger f(X) A + B^\dagger f(Y)B $ for all $A, B$ with $A^\dagger A + B^\dagger B \leq I$ and all $X, Y$ with spectrum in $[0, \alpha)$.
		\item 
		$f(PXP) \leq Pf(X) P$ for every projection $P$ and every self-adjoint $X$ with spectrum in $[0, \alpha)$.
	\end{enumerate}
\end{lemma}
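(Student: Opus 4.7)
The plan is to establish the four equivalences via the cycle $(1)\Rightarrow(3)\Rightarrow(2)\Rightarrow(4)\Rightarrow(1)$, isolating the substantive content in the single implication $(1)\Rightarrow(3)$ and leaving the remaining arrows as short specialization arguments.

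I would dispatch the easy arrows first. For $(3)\Rightarrow(2)$, given $A$ with $\|A\|\le 1$, set $B=(I-A^{\dagger}A)^{1/2}$ (meaningful since $I-A^{\dagger}A\ge 0$) and $Y=0$; then $A^{\dagger}A+B^{\dagger}B=I$ and (3) gives $f(A^{\dagger}XA)\le A^{\dagger}f(X)A + f(0)\,B^{\dagger}B$. Plugging $A=B=0$, $X=Y=0$ into (3) itself yields $f(0)\le 0$, so the extra term is nonpositive and (2) follows. For $(2)\Rightarrow(4)$, projections satisfy $\|P\|\le 1$, so (4) is a direct specialization. For $(4)\Rightarrow(1)$, the inequality $f(0)\le 0$ is extracted by taking $X=0$ and any nontrivial projection $P$: then $f(0)\,I\le f(0)\,P$, i.e.\ $f(0)(I-P)\le 0$. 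For the operator convexity half, on $\mathcal{H}\oplus\mathcal{H}$ I would use the block diagonal $X=\mathrm{diag}(A,B)$ together with the rank-one projection
\[
P=\begin{pmatrix}\lambda I & \sqrt{\lambda(1-\lambda)}\,I\\ \sqrt{\lambda(1-\lambda)}\,I & (1-\lambda)I\end{pmatrix},
\]
whose compression $PXP$ has $\lambda A+(1-\lambda)B$ appearing as a natural diagonal block after conjugation, so comparing the $(1,1)$ entries in $f(PXP)\le Pf(X)P$ delivers the convexity inequality.

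The crux is $(1)\Rightarrow(3)$. Given $A,B$ with $A^{\dagger}A+B^{\dagger}B\le I$ and self-adjoint $X,Y$ with spectra in $[0,\alpha)$, I would use a Halmos-type unitary dilation: complete the column $\binom{A}{B}$ to an isometry, and then further to a unitary $U$ on $\mathcal{H}\oplus\mathcal{H}$, arranged so that $A^{\dagger}XA+B^{\dagger}YB$ is the $(1,1)$-block of $U^{\dagger}\,\mathrm{diag}(X,Y)\,U$. Setting $T=\mathrm{diag}(X,Y)$ and $S=\mathrm{diag}(I,-I)$, the average $\tfrac{1}{2}\bigl(U^{\dagger}TU+SU^{\dagger}TUS\bigr)$ is block diagonal with $A^{\dagger}XA+B^{\dagger}YB$ as its upper-left block. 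Operator convexity of $f$ applied to this average yields a block-diagonal upper bound whose $(1,1)$ corner is the compression of $U^{\dagger}f(T)U$, i.e.\ $A^{\dagger}f(X)A+B^{\dagger}f(Y)B$ plus a contribution from the lower-right. The hypothesis $f(0)\le 0$ is then invoked to show that the leftover contribution can only decrease the upper bound after compressing back to $\mathcal{H}$.

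The main obstacle is precisely the dilation step: arranging $U$ so that the spectrum of $\mathrm{diag}(X,Y)$ stays within $[0,\alpha)$ (no issue here since it is a direct sum of operators with spectra already in $[0,\alpha)$), and more delicately, controlling the compression from $\mathcal{H}\oplus\mathcal{H}$ back to $\mathcal{H}$ so that the off-block terms that appear when applying $f$ do not pollute the desired inequality. This is exactly where $f(0)\le 0$ is indispensable: without it, the extraneous block produced by the averaging trick could push the compressed bound in the wrong direction, and the equivalence with (1) would fail.
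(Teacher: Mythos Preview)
The paper does not prove this lemma at all: it is quoted verbatim as a known result from Hansen and Pedersen (1982), with no accompanying argument. So there is no ``paper's own proof'' to compare against; your proposal is being measured against the original Hansen--Pedersen argument that the paper merely cites.

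Your outline is essentially that original argument, and the overall architecture (the cycle $(1)\Rightarrow(3)\Rightarrow(2)\Rightarrow(4)\Rightarrow(1)$, the block $2\times 2$ dilation for the hard step, and the averaging with $S=\mathrm{diag}(I,-I)$) is correct. Two small points are worth tightening. First, in $(1)\Rightarrow(3)$ the column $\binom{A}{B}$ is an isometry only when $A^{\dagger}A+B^{\dagger}B=I$; under the weaker hypothesis $\le I$ you should first reduce to the equality case by appending a third slot with contraction $C=(I-A^{\dagger}A-B^{\dagger}B)^{1/2}$ and operator $Z=0$, then use $f(0)\le 0$ to discard the extra $C^{\dagger}f(0)C$ term. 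Second, in $(4)\Rightarrow(1)$ you should note that $PXP$ has $0$ in its spectrum (coming from $\ker P$), which is harmless here because the domain is $[0,\alpha)$, but it explains why the interval must start at $0$ for the equivalence to go through. With those clarifications your sketch matches the cited source.
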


We utilize Dirac's bra-ket notation \cite{dirac1939new} for simplifying our notations. Recall that $\ket{u}$ denotes a column vector of length $n$, or equivalently an $n \times 1$ matrix. The conjugate transpose of $\ket{u}$ is given by $\bra{u}$, which is a row vector of length $n$. Note that, $\ket{u} \bra{u}$ is a self-adjoint matrix of order $n$. Now the spectral decomposition of any self-adjoint matrix $A$ can be written as $A = \sum\limits_{i = 1}^n \lambda_i \ket{u_i} \bra{u_i}$ where $\ket{u_i}$ is a normalized eigenvector corresponding to the eigenvalue $\lambda_i$.

The superadditivity of a mean has been shown in \cite{KA1980} and the corresponding advanced results for a mean and a perspective have been studied in \cite{Dra2017,Nik2021}. The following proposition is proven by the elementary calculations under a certain condition.
\begin{proposition}\label{sec4_theorem6}
	Let $A_1, A_2, \cdots A_m$ and $B_1, B_2, \cdots B_m$ be a two sets of positive definite matrices of order $n$, and $A \geq m I$, where $\sum\limits_{i = 1}^m A_i=:A$. Also, let $f$ be an operator concave function on an interval $[0, \alpha)$ containing the eigenvalues of $B_i$ and $B = \sum\limits_{i = 1}^m B_i$. Then
	$$\sum_{i = 1}^m A_i^{\frac{1}{2}} f \left(A_i^{- \frac{1}{2}} B_i A_i^{-\frac{1}{2}} \right) A_i^{\frac{1}{2}} \leq A^{\frac{1}{2}}f \left( A^{-\frac{1}{2}} B A^{-\frac{1}{2}} \right) A^{\frac{1}{2}}.$$
\end{proposition}
\begin{proof}
	As each $B_i$ is a self-adjoint operator, the spectral decomposition of $B_i$ can be written as
	\begin{equation*}
	B_i = \sum_{k=1}^n \lambda_k^{(i)} \ket{u_k^{(i)}} \bra{u_k^{(i)}},
	\end{equation*}
	where $\ket{u_k^{(i)}}$ is an eigenvector associated to the eigenvalue $\lambda_k^{(i)}$ of $B_i$. 
	Then we have
	\begin{equation*} 
	A_i^{-1/2}B_iA_i^{-1/2}=\sum_{k=1}^n \lambda_k^{(i)}|w_k^{(i)}\rangle\langle w_k^{(i)}|,
	\end{equation*} 
	where $|w_k^{(i)}\rangle:=A_i^{-1/2}|u_k^{(i)}\rangle$. Now for a function $f$ we have 
	\begin{equation*}
	f\left(A_i^{-1/2}B_iA_i^{-1/2}\right)=\sum_{k=1}^n f\left(\lambda_k^{(i)}\right)|w_k^{(i)}\rangle\langle w_k^{(i)}|.
	\end{equation*} 
	Combining we get
	\begin{equation*}
	\begin{split} 
	A_i^{1/2}f\left(A_i^{-1/2}B_iA_i^{-1/2}\right)A_i^{1/2} = & \sum_{k=1}^n f\left(\lambda_k^{(i)}\right)A_i^{1/2}|w_k^{(i)}\rangle\langle w_k^{(i)}|A_i^{1/2}\\
	= & \sum_{k=1}^n f\left(\lambda_k^{(i)}\right)|u_k^{(i)}\rangle\langle u_k^{(i)}|=f\left(B_i\right).
	\end{split} 
	\end{equation*}
	Summing over $i$ we find 
	\begin{equation}\label{sum_f(B_i)_over_i}
	\sum_{i = 1}^m A_i^{1/2}f\left(A_i^{-1/2}B_iA_i^{-1/2}\right)A_i^{1/2} = \sum_{i = 1}^m f(B_i).
	\end{equation}
	If $A \geq m I$, then we have $\sum_{i = 1}^m A^{-\frac{1}{2}} A^{-\frac{1}{2}} =mA^{-1} \leq I$. Now, applying Lemma \ref{hansen_jensen_theory} we find that
	\begin{equation*}
	\begin{split} 
	A^{-\frac{1}{2}} \sum_{i = 1}^m f(B_i) A^{-\frac{1}{2}} & = \sum_{i = 1}^m A^{-\frac{1}{2}} f(B_i) A^{-\frac{1}{2}} \leq f \left(\sum_{i = 1}^m A^{-\frac{1}{2}} B_i A^{-\frac{1}{2}} \right) \\
	& = f \left(A^{-\frac{1}{2}} \sum_{i = 1}^m B_i A^{-\frac{1}{2}} \right) = f \left(A^{-\frac{1}{2}} B A^{-\frac{1}{2}} \right).
	\end{split} 
	\end{equation*}
	Multiplying $A^{\frac{1}{2}}$ in both side of the above ineuality we get
	\begin{equation*}
	\sum_{i = 1}^m f(B_i) \leq A^{\frac{1}{2}} f \left((A^{-\frac{1}{2}})^\dagger B A^{-\frac{1}{2}} \right) A^{\frac{1}{2}}.
	\end{equation*}
	Putting the value of $f(B_i)$ from equation (\ref{sum_f(B_i)_over_i}) we find the result. 
\end{proof}		

\begin{remark}
	If $A_i$ and $B_i$ are replaced by positive real numbers $a_i$ and $b_i$ respectively in the above theorem, we get
	\begin{equation*}
	\sum_{i = 1}^m a_i^{\frac{1}{2}} f \left(a_i^{- \frac{1}{2}} b_i a_i^{-\frac{1}{2}} \right) a_i^{\frac{1}{2}} \leq a^{\frac{1}{2}}f \left( a^{-\frac{1}{2}} b a^{-\frac{1}{2}} \right) a^{\frac{1}{2}},
	\end{equation*}
	where $a = \sum\limits_{i = 1}^m a_i$ and $b = \sum\limits_{i = 1}^m b_i$ as well as $f$ is a convex function. Simplifying we get
	\begin{equation}\label{remark1_eq01}
	\sum_{i = 1}^m a_i f\left(\frac{b_i}{a_i}\right) \leq a f \left(\frac{b}{a}\right).
	\end{equation}
	Considering $f(x) = \log(x)$ we get the usual log-sum inequality.
\end{remark}

\begin{corollary}\label{sec4_corollary2}
	Let $A_1, A_2, \cdots, A_m$ and $B_1, B_2, \cdots, B_m$ be two sets of positive definite self-adjoint operators with $A = \sum\limits_{i = 1}^m A_i$ and $B = \sum\limits_{i = 1}^m B_i$, such that, $A = B$. Also, let $f$ be an operator concave function on an interval $[0, \alpha)$ containing the eigenvalues of $B_i$ and $B$ as well as $f(1) = 0$. Then
	$$\sum_{i = 1}^m A_i^{\frac{1}{2}} f(A_i^{-\frac{1}{2}} B_i A_i^{-\frac{1}{2}}) A_i^{\frac{1}{2}} \leq 0.$$
\end{corollary}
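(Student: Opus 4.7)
The plan is to apply Theorem \ref{sec4_theorem6} directly and observe that, under the two extra hypotheses $A=B$ and $f(1)=0$, the right-hand side collapses to $0$. The corollary is really just a specialization of the theorem.

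First, I would invoke Theorem \ref{sec4_theorem6} with the given $A_i$, $B_i$, and $f$ to obtain
\begin{equation*}
\sum_{i=1}^m A_i^{1/2}\, f\left(A_i^{-1/2} B_i A_i^{-1/2}\right) A_i^{1/2} \;\leq\; A^{1/2}\, f\left(A^{-1/2} B A^{-1/2}\right) A^{1/2}.
\end{equation*}
Note that the theorem also requires $A \geq mI$; this hypothesis is not restated explicitly in the corollary but must be inherited in order for the operator Jensen step inside Theorem \ref{sec4_theorem6} to apply (it is what guarantees $\sum_{i=1}^m (A^{-1/2})^{\dagger}(A^{-1/2}) \leq I$). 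I would either list this as an assumption or flag it parenthetically in the proof.

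Next, I would simplify the right-hand side using $A=B$: indeed $A^{-1/2} B A^{-1/2} = A^{-1/2} A A^{-1/2} = I$. By the functional calculus definition in \eqref{matrix_function} together with $f(1)=0$, we have $f(I) = f(1)\, I = 0$, so $A^{1/2}\, f(I)\, A^{1/2} = 0$. Combining with the previous inequality yields
\begin{equation*}
\sum_{i=1}^m A_i^{1/2}\, f\left(A_i^{-1/2} B_i A_i^{-1/2}\right) A_i^{1/2} \;\leq\; 0,
\end{equation*}
as claimed.

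There is no genuine technical obstacle: once Theorem \ref{sec4_theorem6} is granted, the corollary is immediate, and the only thing demanding attention is the correct transport of the hypothesis $A \geq mI$ and the trivial functional-calculus identity $f(I)=0$.
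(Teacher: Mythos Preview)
Your proposal is correct and matches the paper's own proof, which simply says the result ``follows trivially from Theorem \ref{sec4_theorem6}.'' You supply exactly the intended specialization (using $A=B$ to get $A^{-1/2}BA^{-1/2}=I$ and then $f(I)=0$), and you rightly flag that the hypothesis $A\geq mI$ from Theorem \ref{sec4_theorem6} is tacitly needed here even though the corollary does not restate it.
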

\begin{proof}
	The proof follows trivially from Proposition \ref{sec4_theorem6}. 
\end{proof}

\begin{remark}
	The operator Shannon inequality was given in \cite{furuta2004parametric} 
	\begin{equation*}
	\sum_{i=1}^m A_{i}^{\frac{1}{2}} \log \left(A_i^{-\frac{1}{2}}B_iA_i^{-\frac{1}{2}}\right)A_i^{\frac{1}{2}}\le 0
	\end{equation*}
	under the assumption $\sum\limits_{i=1}^m A_i = \sum\limits_{i=1}^n B_i =I$.
	Our condition in Corollary \ref{sec4_corollary2} is slightly weaker than this assumption. We also observe that the operator Shannon inequality holds for any operator concave function $f$.
\end{remark}

If every $A_i$ is expansive (i.e., $A_i \geq I$), then the condition $A \geq mI$ in Proposition \ref{sec4_theorem6} is satisfied. However, we have not obtained a proper result for contractive condition such as $A_i \leq I$. Closing this section, we present a result which does not impose an additional condition on the matrices $A_i$. We need the following known facts for proving the next theorem:
\begin{lemma}{\bf (\cite{MR563403})}  \label{lemma8}
	Let $X$ and $A$ be bounded linear operators on a Hilbert space $\mathcal{H}$. Suppose that $X \geq 0$ and $||A|| \leq 1$. If $f$ is an operator monotone function defined on $[0, \infty)$, then $A^\dagger f (X) A \leq f(A^\dagger X A)$.
\end{lemma}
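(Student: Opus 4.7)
The strategy is to derive Lemma \ref{lemma8} as a corollary of the operator Jensen inequality already recorded in Lemma \ref{hansen_jensen_theory}, combined with the classical L\"owner theorem that any operator monotone function on $[0,\infty)$ is automatically operator concave. One should note at the outset that the statement implicitly requires $f(0) \geq 0$: otherwise it can be falsified, e.g., by taking $f(t) = \sqrt{t} - 1$ with $X = 0$ and any strict contraction $A$, for which $A^\dagger f(X) A = -A^\dagger A$ is not dominated by $f(A^\dagger X A) = -I$.

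First, I would invoke L\"owner's integral representation,
\[
f(t) = f(0) + \beta t + \int_0^\infty \frac{ts}{s+t}\, d\mu(s), \qquad \beta \geq 0,\ \mu \geq 0,
\]
to conclude that $f$ is operator concave on $[0,\infty)$. Equivalently, $g := -f$ is operator convex on $[0,\infty)$ and satisfies $g(0) \leq 0$.

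Next, I would apply condition (2) of Lemma \ref{hansen_jensen_theory} to the operator convex function $g$: since $X \geq 0$ has spectrum contained in $[0,\infty)$ and $\|A\| \leq 1$, this gives $g(A^\dagger X A) \leq A^\dagger g(X) A$, and negating both sides yields the desired $A^\dagger f(X) A \leq f(A^\dagger X A)$.

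The main obstacle is the bridge from operator monotonicity to operator concavity, which is the content of L\"owner's theorem --- a deep classical result whose proof requires substantial Nevanlinna-Pick machinery and which I would cite rather than reprove. A fully self-contained alternative would be to verify the inequality term-by-term on the integral representation above: the constant piece reduces to $f(0)(I - A^\dagger A) \geq 0$ by virtue of $f(0) \geq 0$ and $A^\dagger A \leq I$; the linear piece is an equality on both sides; and each Pick-type integrand $\tfrac{ts}{s+t} = s - s^2(s+t)^{-1}$ can be handled via a $2\times 2$ block-matrix / Schur-complement argument exploiting $\|A\| \leq 1$.
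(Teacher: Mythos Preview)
The paper does not supply its own proof of Lemma~\ref{lemma8}; it is quoted verbatim from Hansen's 1980 note \cite{MR563403} and used as a black box. So there is no ``paper's proof'' to compare against. Your argument is nonetheless sound: operator monotone functions on $[0,\infty)$ are operator concave (L\"owner's theorem), so $g=-f$ is operator convex with $g(0)\le 0$, and then condition~(2) of Lemma~\ref{hansen_jensen_theory} gives exactly $A^\dagger f(X)A \le f(A^\dagger X A)$. It is worth remarking that historically the implication runs the other way --- Hansen's 1980 inequality predates and motivated the Hansen--Pedersen operator Jensen inequality of 1982 --- so you are deriving the special case from the later general result, which is perfectly legitimate here since the paper already records Lemma~\ref{hansen_jensen_theory}.

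Your observation about the hidden hypothesis $f(0)\ge 0$ is sharp and correct: the counterexample $f(t)=\sqrt{t}-1$, $X=0$, $A$ a strict contraction genuinely breaks the lemma as literally stated. The paper's single application of Lemma~\ref{lemma8} (inside the proof of Theorem~\ref{theorem10}) also silently relies on $f(0)\ge 0$ and on $f>0$ on $(0,\infty)$ for the subsequent inverses to make sense, so this caveat is not merely pedantic.
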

\begin{lemma}{\bf (\cite[p.14]{zhan2004matrix})} \label{lemma9}
	For any square matrix $X_i$ and positive definite matrices $A_i$ we have 
	\begin{equation*}
	\sum_{i = 1}^m X^\dagger_i A_i^{-1} X_i \geq \left(\sum_{i = 1}^m X_i\right)^\dagger \left( \sum_{i = 1}^m A_i \right)^{-1} \left(\sum_{i = 1}^m X_i\right).
	\end{equation*}
\end{lemma}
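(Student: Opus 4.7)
The plan is to prove this operator Cauchy--Schwarz-type inequality by the block matrix / Schur complement trick, which is the cleanest route. The key idea is to package each term $X_i^\dagger A_i^{-1} X_i$ as the Schur complement of a manifestly positive semidefinite $2n \times 2n$ block matrix, sum over $i$, and then read off the inequality from the Schur complement of the summed block matrix.

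Concretely, I would first form, for each $i$, the block matrix
$$
M_i \;:=\; \begin{pmatrix} A_i & X_i \\ X_i^\dagger & X_i^\dagger A_i^{-1} X_i \end{pmatrix},
$$
and observe that it admits the factorization $M_i = V_i V_i^\dagger$ with
$$
V_i \;=\; \begin{pmatrix} A_i^{1/2} \\ X_i^\dagger A_i^{-1/2} \end{pmatrix},
$$
which makes $M_i \geq 0$ immediate. Summing yields
$$
\sum_{i=1}^m M_i \;=\; \begin{pmatrix} \sum_i A_i & \sum_i X_i \\ \sum_i X_i^\dagger & \sum_i X_i^\dagger A_i^{-1} X_i \end{pmatrix} \;\geq\; 0,
$$
since a finite sum of positive semidefinite operators is positive semidefinite.

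Next I would invoke the standard Schur complement criterion: a block matrix $\begin{pmatrix} P & Q \\ Q^\dagger & R \end{pmatrix}$ with $P>0$ is positive semidefinite if and only if $R - Q^\dagger P^{-1} Q \geq 0$. Applying this with $P = \sum_i A_i$, which is positive definite as a sum of positive definite matrices (so that $P^{-1}$ exists), $Q = \sum_i X_i$, and $R = \sum_i X_i^\dagger A_i^{-1} X_i$, one obtains precisely
$$
\sum_{i=1}^m X_i^\dagger A_i^{-1} X_i \;-\; \Bigl(\sum_{i=1}^m X_i\Bigr)^\dagger \Bigl(\sum_{i=1}^m A_i\Bigr)^{-1} \Bigl(\sum_{i=1}^m X_i\Bigr) \;\geq\; 0,
$$
which is the claimed inequality.

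There is essentially no hard step here: the whole argument hinges on recognizing the right $2 \times 2$ block to construct. The only mildly delicate point is confirming that $\sum_i A_i$ is invertible (needed to form the Schur complement), but this is automatic from the hypothesis that each $A_i$ is positive definite. An alternative, slightly more pedestrian route would be to note that for any vector $\ket{v}$ one has $\bigl\langle v \bigm| X_i^\dagger A_i^{-1} X_i \bigm| v \bigr\rangle = \|A_i^{-1/2} X_i \ket{v}\|^2$ and then apply the vector Cauchy--Schwarz inequality in a direct-sum Hilbert space, but I expect the Schur complement presentation to be shorter and more transparent.
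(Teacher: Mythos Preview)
Your proof is correct. The block-matrix/Schur-complement argument you outline is the standard route to this inequality, and every step checks out: the factorization $M_i = V_i V_i^\dagger$ verifies $M_i \geq 0$, positivity is preserved under summation, and the Schur-complement criterion (with $\sum_i A_i > 0$ guaranteed by the positive-definiteness hypothesis) yields exactly the claimed inequality.

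Note, however, that the paper does \emph{not} give its own proof of this lemma: it is stated as a known fact with a citation to Zhan's \emph{Matrix Inequalities} (p.~14) and then used as a black box in the proof of Theorem~\ref{theorem10}. So there is no in-paper argument to compare against. Your proof is essentially the one found in that reference, so you have supplied what the paper omits.
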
	

\begin{theorem}\label{theorem10}
	Let $A_1, A_2, \cdots, A_m$ and $B_1, B_2, \cdots, B_m$ be two sets of positive definite matrices, as well as $A = \sum\limits_{i = 1}^m A_i$ and $B = \sum\limits_{i = 1}^m B_i$. Also, $f$ is an operator monotone function defined on $[0, \infty)$. Then we have
	\begin{equation}\label{theorem10_ineq01}
	\frac{1}{m} \left( \sum_{i = 1}^m B_i^{\frac{1}{2}} \right) \left[ \sum_{i = 1}^m f \left( B_i^{\frac{1}{2}} A_i^{-1} B_i^{\frac{1}{2}} \right) \right]^{-1} \left( \sum_{i = 1}^m B_i^{\frac{1}{2}} \right) \leq B^{\frac{1}{2}} \left[ f \left( B^{\frac{1}{2}}  A^{-1} B^{\frac{1}{2}} \right) \right]^{-1} B^{\frac{1}{2}},
	\end{equation}
	and
	\begin{eqnarray}
		&& \sum_{i = 1}^m A_i^{\frac{1}{2}} B_i^{\frac{1}{2}} \left[ f \left( B_i^{\frac{1}{2}} A_i^{-1} B_i^{\frac{1}{2}} \right) \right]^{-1} B_i^{\frac{1}{2}} A_i^{\frac{1}{2}} \nonumber \\
		&& \leq \frac{1}{m} \left( \sum_{i = 1}^m A_i^{\frac{1}{2}}\right) B^{\frac{1}{2}} \left[ f \left( B^{\frac{1}{2}}  A^{-1} B^{\frac{1}{2}} \right)\right]^{-1} B^{\frac{1}{2}} \left(\sum_{i = 1}^m A_i^{\frac{1}{2}} \right).\label{theorem10_ineq02}
	\end{eqnarray}
\end{theorem}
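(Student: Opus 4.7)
The plan is to reduce both inequalities to a single pointwise estimate
$B_i^{1/2}[f(C_i)]^{-1}B_i^{1/2} \leq B^{1/2}[f(C)]^{-1}B^{1/2}$, where $C_i := B_i^{1/2}A_i^{-1}B_i^{1/2}$ and $C := B^{1/2}A^{-1}B^{1/2}$, and then to close each inequality using Lemma~\ref{lemma9}.

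To establish the pointwise bound I would proceed in two stages. Since $A_i \leq A$ implies $A_i^{-1} \geq A^{-1}$, we have $C_i \geq B_i^{1/2}A^{-1}B_i^{1/2}$; operator monotonicity of $f$ together with the order-reversing property of inversion then gives $[f(C_i)]^{-1} \leq [f(B_i^{1/2}A^{-1}B_i^{1/2})]^{-1}$. Next, set $T := B^{-1/2}B_i^{1/2}$; since $B_i \leq B$ yields $T^\dagger T = B_i^{1/2}B^{-1}B_i^{1/2} \leq I$, the operator $T$ is a contraction, and Lemma~\ref{lemma8} applied to the operator monotone $f$ at $X = C$ gives $T^\dagger f(C) T \leq f(T^\dagger C T) = f(B_i^{1/2}A^{-1}B_i^{1/2})$. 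Because $B$ and $B_i$ are positive definite, $T$ is invertible, so inverting the last inequality and then pre-/post-multiplying by $B_i^{1/2}$ (using $B_i^{1/2}T^{-1} = B^{1/2}$) yields $B_i^{1/2}[f(B_i^{1/2}A^{-1}B_i^{1/2})]^{-1}B_i^{1/2} \leq B^{1/2}[f(C)]^{-1}B^{1/2}$, which combined with the first stage proves the pointwise bound.

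Inequality~\eqref{theorem10_ineq01} would then follow by applying Lemma~\ref{lemma9} with $X_i := B_i^{1/2}$ and the positive-definite matrices $f(C_i)$ playing the role of the lemma's $A_i$, which gives
\begin{equation*}
\Bigl(\sum_{i=1}^m B_i^{1/2}\Bigr)\Bigl(\sum_{i=1}^m f(C_i)\Bigr)^{-1}\Bigl(\sum_{j=1}^m B_j^{1/2}\Bigr) \leq \sum_{i=1}^m B_i^{1/2}[f(C_i)]^{-1}B_i^{1/2}.
\end{equation*}
Summing the pointwise bound bounds the right-hand side above by $m \cdot B^{1/2}[f(C)]^{-1}B^{1/2}$, and dividing by $m$ finishes the first inequality.

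For inequality~\eqref{theorem10_ineq02}, I would sandwich the pointwise bound by $A_i^{1/2}$ on both sides to obtain $A_i^{1/2}B_i^{1/2}[f(C_i)]^{-1}B_i^{1/2}A_i^{1/2} \leq A_i^{1/2} M A_i^{1/2}$ with $M := B^{1/2}[f(C)]^{-1}B^{1/2}$, and sum over $i$. The remaining step is to compare $\sum_i A_i^{1/2} M A_i^{1/2}$ with $\frac{1}{m}\bigl(\sum_i A_i^{1/2}\bigr) M \bigl(\sum_j A_j^{1/2}\bigr)$, and this is precisely the step I expect to be the main obstacle: the standard matrix Cauchy--Schwarz $\bigl(\sum_i X_i\bigr)^\dagger M \bigl(\sum_j X_j\bigr) \leq m \sum_i X_i^\dagger M X_i$ runs in the \emph{opposite} direction to what is wanted, so a more refined argument is needed, presumably by exploiting the particular structure of $M$ through a second application of Lemma~\ref{lemma9} (with a suitable choice of $X_i$ and positive-definite matrices) rather than by a direct Cauchy--Schwarz estimate.
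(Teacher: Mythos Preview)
Your pointwise bound and your proof of \eqref{theorem10_ineq01} are essentially the paper's argument, only reorganized: the paper runs operator monotonicity and Lemma~\ref{lemma8} as a single chain and then conjugates by $B_i^{-1/2}$ before inverting, whereas you split the monotonicity step and the Lemma~\ref{lemma8} step into two stages and invert first. Note, incidentally, that your (and the paper's) application of Lemma~\ref{lemma9} with the matrices $f(C_i)$ tacitly requires each $f(C_i)$ to be positive definite, which is not part of the stated hypotheses.

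For \eqref{theorem10_ineq02} you have correctly located the obstacle. The paper's device is indeed a second use of Lemma~\ref{lemma9}, but after a sign change: writing $M := B^{1/2}[f(C)]^{-1}B^{1/2}$, the paper applies Lemma~\ref{lemma9} with $X_i = A_i^{1/2}$ and with the lemma's positive-definite matrices all taken equal to $-M^{-1} = -B^{-1/2}f(C)B^{-1/2}$, which yields
\[
-\sum_{i} A_i^{1/2} M A_i^{1/2} \;\geq\; -\tfrac{1}{m}\Bigl(\sum_{i} A_i^{1/2}\Bigr) M \Bigl(\sum_{j} A_j^{1/2}\Bigr),
\]
and hence the desired direction after cancelling the minus sign. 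This step is legitimate only when $-f(C)$ is positive definite, i.e.\ when $f$ is negative on the spectrum of $B^{1/2}A^{-1}B^{1/2}$; the paper does not make this hypothesis explicit, but without it Lemma~\ref{lemma9} cannot be invoked and your Cauchy--Schwarz objection stands. So the ingredient you were looking for is not a new structural idea but an (unstated) sign assumption on $f$.
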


\begin{proof}
	We can write
	\begin{equation*}
	B_i^{\frac{1}{2}} A_i^{-1} B_i^{\frac{1}{2}} = B_i^{\frac{1}{2}} \left(B^{-\frac{1}{2}} B^{\frac{1}{2}} \right) A_i^{-1} \left(B^{\frac{1}{2}} B^{-\frac{1}{2}} \right) B_i^{\frac{1}{2}} = B_i^{\frac{1}{2}} B^{-\frac{1}{2}} \left( B^{\frac{1}{2}}  A_i^{-1} B^{\frac{1}{2}} \right) B^{-\frac{1}{2}} B_i^{\frac{1}{2}}.
	\end{equation*}
	We have $A \geq A_i$, that is $A^{-1} \leq A_i^{-1}$. Also for any complex square matrix $X$ we have $X^\dagger A^{-1} X \leq X^\dagger A_i^{-1} X$. Applying these together we obtain
	\begin{equation*}
	B_i^{\frac{1}{2}} A_i^{-1} B_i^{\frac{1}{2}} \geq B_i^{\frac{1}{2}} B^{-\frac{1}{2}} \left( B^{\frac{1}{2}}  A^{-1} B^{\frac{1}{2}} \right) B^{-\frac{1}{2}} B_i^{\frac{1}{2}}.
	\end{equation*}
	As $f(t)$ is a matrix monotone function, we can write
	\begin{equation*}
	f \left( B_i^{\frac{1}{2}} A_i^{-1} B_i^{\frac{1}{2}} \right) \geq f \left( B_i^{\frac{1}{2}} B^{-\frac{1}{2}} \left( B^{\frac{1}{2}}  A^{-1} B^{\frac{1}{2}} \right) B^{-\frac{1}{2}} B_i^{\frac{1}{2}} \right).
	\end{equation*}
	From $B > B_i$ for all $i = 1, 2, \cdots, n$,  we can see 
	\begin{equation*}
	I > B^{-\frac12}B_iB^{-\frac12} = \left(B_i^{\frac12}B^{-\frac12}\right)^{\dagger}B_i^{\frac12}B^{-\frac12}
	\end{equation*} 
	which implies $1 > \vert|\left(B_i^{\frac12}B^{-\frac12}\right)^{\dagger}B_i^{\frac12}B^{-\frac12} \vert| =\vert| B_i^{\frac12}B^{-\frac12}\vert|^2$, since $\vert| A\vert|=\vert|A^{\dagger}A\vert|^{\frac12}$ for every operator $A$ in general. Thus we have $\vert| B_i^{\frac12}B^{-\frac12}\vert| < 1$. We also have $\Vert B^{-\frac{1}{2}} B_i^{\frac{1}{2}} \Vert \leq 1$ so that we have the following inequality by Lemma \ref{lemma8},
	\begin{equation*}
	f \left( B_i^{\frac{1}{2}} B^{-\frac{1}{2}} \left( B^{\frac{1}{2}}  A^{-1} B^{\frac{1}{2}} \right) B^{-\frac{1}{2}} B_i^{\frac{1}{2}} \right) \geq \left( B_i^{\frac{1}{2}} B^{-\frac{1}{2}} \right) f \left( B^{\frac{1}{2}}  A^{-1} B^{\frac{1}{2}} \right) \left( B^{-\frac{1}{2}} B_i^{\frac{1}{2}} \right).
	\end{equation*}
	That is,
	\begin{equation*}
	f \left( B_i^{\frac{1}{2}} A_i^{-1} B_i^{\frac{1}{2}} \right) \geq \left( B_i^{\frac{1}{2}} B^{-\frac{1}{2}} \right) f \left( B^{\frac{1}{2}}  A^{-1} B^{\frac{1}{2}} \right) \left( B^{-\frac{1}{2}} B_i^{\frac{1}{2}} \right). 
	\end{equation*}
	Multiplying $B_i^{-\frac{1}{2}}$ to the both sides, we have
	\begin{equation*}
	B_i^{-\frac{1}{2}} f \left( B_i^{\frac{1}{2}} A_i^{-1} B_i^{\frac{1}{2}} \right) B_i^{-\frac{1}{2}} \geq B^{-\frac{1}{2}} f \left( B^{\frac{1}{2}}  A^{-1} B^{\frac{1}{2}} \right) B^{-\frac{1}{2}}.
	\end{equation*}
	Taking an inverse of the both sides, we have
	\begin{equation}\label{theorem10_eq57}
	B_i^{\frac{1}{2}} \left[ f \left( B_i^{\frac{1}{2}} A_i^{-1} B_i^{\frac{1}{2}} \right) \right]^{-1} B_i^{\frac{1}{2}} \leq B^{\frac{1}{2}} \left[ f \left( B^{\frac{1}{2}}  A^{-1} B^{\frac{1}{2}} \right) \right]^{-1} B^{\frac{1}{2}}.		
	\end{equation}
	Thus we have
	\begin{equation}\label{theorem10_eq58}
	\frac{1}{m} \sum_{i = 1}^m B_i^{\frac{1}{2}} \left[ f \left( B_i^{\frac{1}{2}} A_i^{-1} B_i^{\frac{1}{2}} \right) \right]^{-1} B_i^{\frac{1}{2}} \leq B^{\frac{1}{2}} \left[ f \left( B^{\frac{1}{2}}  A^{-1} B^{\frac{1}{2}} \right) \right]^{-1} B^{\frac{1}{2}}.
	\end{equation}
	Applying Lemma \ref{lemma9} to the above, we have
	\begin{eqnarray}
		&& \frac{1}{m} \sum_{i = 1}^m B_i^{\frac{1}{2}} \left[ f \left( B_i^{\frac{1}{2}} A_i^{-1} B_i^{\frac{1}{2}} \right) \right]^{-1} B_i^{\frac{1}{2}} \nonumber \\
		&& \geq \frac{1}{m} \left( \sum_{i = 1}^m B_i^{\frac{1}{2}} \right) \left[ \sum_{i = 1}^m f \left( B_i^{\frac{1}{2}} A_i^{-1} B_i^{\frac{1}{2}} \right) \right]^{-1} \left( \sum_{i = 1}^m B_i^{\frac{1}{2}} \right). \label{theorem10_eq59}
	\end{eqnarray}
	Combining \eqref{theorem10_eq58} and \eqref{theorem10_eq59}, we get the inequality \eqref{theorem10_ineq01}.
	
	To prove the inequality \eqref{theorem10_ineq02}, we start from \eqref{theorem10_eq57}. By multiplying $A_i^{\frac{1}{2}}$ to the both sides in \eqref{theorem10_eq57}, we have
	\begin{equation*}
	A_i^{\frac{1}{2}} B_i^{\frac{1}{2}} \left[ f \left( B_i^{\frac{1}{2}} A_i^{-1} B_i^{\frac{1}{2}} \right) \right]^{-1} B_i^{\frac{1}{2}} A_i^{\frac{1}{2}} \leq A_i^{\frac{1}{2}} B^{\frac{1}{2}} \left[ f \left( B^{\frac{1}{2}}  A^{-1} B^{\frac{1}{2}} \right) \right]^{-1} B^{\frac{1}{2}} A_i^{\frac{1}{2}}.
	\end{equation*}
	Taking a summation on $i$ from $1$ to $m$ for the both sides in the above inequality, we obtain
	\begin{equation*}
	\sum_{i = 1}^m A_i^{\frac{1}{2}} B_i^{\frac{1}{2}} \left[ f \left( B_i^{\frac{1}{2}} A_i^{-1} B_i^{\frac{1}{2}} \right) \right]^{-1} B_i^{\frac{1}{2}} A_i^{\frac{1}{2}} \leq \sum_{i = 1}^m A_i^{\frac{1}{2}} B^{\frac{1}{2}} \left[ f \left( B^{\frac{1}{2}}  A^{-1} B^{\frac{1}{2}} \right) \right]^{-1} B^{\frac{1}{2}} A_i^{\frac{1}{2}}.
	\end{equation*}
	That is, we have
	\begin{eqnarray}
&&		\sum_{i = 1}^m A_i^{\frac{1}{2}} B_i^{\frac{1}{2}} \left[ (-1) f \left( B_i^{\frac{1}{2}} A_i^{-1} B_i^{\frac{1}{2}} \right) \right]^{-1} B_i^{\frac{1}{2}} A_i^{\frac{1}{2}} \nonumber \\
&& \geq \sum_{i = 1}^m A_i^{\frac{1}{2}} B^{\frac{1}{2}} \left[(-1) f \left( B^{\frac{1}{2}}  A^{-1} B^{\frac{1}{2}} \right) \right]^{-1} B^{\frac{1}{2}} A_i^{\frac{1}{2}}.\label{theorem10_eq62}
	\end{eqnarray}			
	Applying Lemma \ref{lemma9} to the right hand side in \eqref{theorem10_eq62}, we have
	\begin{equation*} 
	\begin{split}
	& \sum_{i = 1}^m A_i^{\frac{1}{2}} B^{\frac{1}{2}} \left[(-1) f \left( B^{\frac{1}{2}}  A^{-1} B^{\frac{1}{2}} \right) \right]^{-1} B^{\frac{1}{2}} A_i^{\frac{1}{2}} \\
	& \geq \left( \sum_{i = 1}^m A_i^{\frac{1}{2}}\right) \left( (-m) B^{-\frac{1}{2}} f \left( B^{\frac{1}{2}}  A^{-1} B^{\frac{1}{2}} \right) B^{-\frac{1}{2}} \right)^{-1} \left(\sum_{i = 1}^m A_i^{\frac{1}{2}} \right),
	\end{split} 
	\end{equation*} 
	that is,
	\begin{equation*}
	\begin{split}
	& - \sum_{i = 1}^m A_i^{\frac{1}{2}} B^{\frac{1}{2}} \left[f \left( B^{\frac{1}{2}}  A^{-1} B^{\frac{1}{2}} \right) \right]^{-1} B^{\frac{1}{2}} A_i^{\frac{1}{2}} \\
	& \geq (- 1) \left( \sum_{i = 1}^m A_i^{\frac{1}{2}}\right) \frac{1}{m}  B^{\frac{1}{2}} \left[ f \left( B^{\frac{1}{2}}  A^{-1} B^{\frac{1}{2}} \right)\right]^{-1} B^{\frac{1}{2}} \left(\sum_{i = 1}^m A_i^{\frac{1}{2}} \right),
	\end{split}
	\end{equation*}
	which implies
	\begin{eqnarray}
&&	\sum_{i = 1}^m A_i^{\frac{1}{2}} B^{\frac{1}{2}} \left[f \left( B^{\frac{1}{2}}  A^{-1} B^{\frac{1}{2}} \right) \right]^{-1} B^{\frac{1}{2}} A_i^{\frac{1}{2}} \nonumber \\
&& \leq \frac{1}{m} \left( \sum_{i = 1}^m A_i^{\frac{1}{2}}\right) B^{\frac{1}{2}} \left[ f \left( B^{\frac{1}{2}}  A^{-1} B^{\frac{1}{2}} \right)\right]^{-1} B^{\frac{1}{2}} \left(\sum_{i = 1}^m A_i^{\frac{1}{2}} \right). \label{theorem10_eq64}
	\end{eqnarray}
	Combining \eqref{theorem10_eq62} and  \eqref{theorem10_eq64}, we get the inequality \eqref{theorem10_ineq02}. 
\end{proof}

\begin{remark}
	Considering $a_i$ and $b_i$ are positive real numbers and replacing $A_i =: a_i$ and $B_i =: b_i$ as well as $f$ is a monotone increasing function we have from equation (\ref{theorem10_ineq01})
	\begin{equation}\label{remark3_eq01}
	\frac{1}{m} \left( \sum_{i = 1}^m b_i^{\frac{1}{2}} \right)^2 \left[ \sum_{i = 1}^m f \left( \frac{b_i}{a_i} \right) \right]^{-1} \leq b \left[ f \left( \frac{b}{a} \right) \right]^{-1}.
	\end{equation}
	Also, from equation (\ref{theorem10_ineq02}) we can write
	\begin{equation}\label{remark3_eq02}
	\sum_{i = 1}^m a_i b_i \left[ f \left( \frac{b_i}{a_i} \right) \right]^{-1} \leq \frac{1}{m} \left( \sum_{i = 1}^m a_i^{\frac{1}{2}}\right)^2 b \left[ f \left( \frac{b}{a} \right)\right]^{-1}.
	\end{equation}
\end{remark}

\section{Conclusion}

The log-sum inequality which is mentioned in equation (\ref{log_sum_inequality}) plays a crucial role in classical and quantum information theory. In this article, we present a number of analogous inequalities. Inequality (\ref{log_sum_inequality}) depends on the convexity of the function $x\log(x)$ which was relaxed utilizing the function $xf(x)$, in \cite{csiszar2004information}. We illustrate that the concavity of $x f(\dfrac{1}{x})$ is also efficient for deriving similar inequalities. We discuss these inequalities with two functions instead of the single function $f$, which increase their scope of applications. In this context, the log-sum inequality for $q$-deformed logarithm is illustrated. We elaborate these results for the commuting self-adjoint matrices as trace-form inequalities. Later we discuss the log-sum inequality for the operator monotone and convex functions in the context of L\"{o}wner partial order relation of the positive semi-definite matrices, which is an application of the Hansen operator inequality. The following problem may be attempted in future.

Both inequalities in \eqref{remark3_eq01} and \eqref{remark3_eq02} do not have same form of log-sum inequality \eqref{remark1_eq01}. Therefore we have to conclude that the restricted condition (so-called expansivity of matrices $A_i$) given in Proposition \ref{sec4_theorem6} leads us to obtain the log-sum inequality for non-commutative matrices. However, we could not obtain such an inequality without restricted condition in Theorem \ref{theorem10}. In the further studies on this topic, we would like to obtain the log-sum inequality for non-commutative contractive matrices. We hope that the obtained results will be useful to study the parametric information theory in the future.

\section*{Funding}
SD was a post doctoral fellow at the Indian Institute of Technology Kharagpur during this work.
SF was partially supported by JSPS KAKENHI grant number 21K03341. 


\end{document}